\newtheorem{thm}{Theorem}
\newtheorem{lem}[thm]{Lemma}
\newtheorem{cor}[thm]{Corollary}
\newtheorem{problem}{Problem}
\def\VEC#1#2#3{#1_{#2},\ldots,#1_{#3}}
\def\qed{\ifhmode\unskip\nobreak\hfill$\Box$\bigskip\fi \ifmmode\eqno{Box}\fi}
\newcommand{\RR}{\mathbb{R}}
\begin{document}

\title{Fractional chromatic number of a random subgraph}

\author{
  Bojan Mohar\thanks{Supported in part by the NSERC Discovery Grant R611450 (Canada),
  by the Canada Research Chairs program,
  and by the Research Project J1-8130 of ARRS (Slovenia).}\\[1mm]
  Department of Mathematics\\
  Simon Fraser University\\
  Burnaby, BC, Canada\\
  {\tt mohar@sfu.ca}
\and
  Hehui Wu\thanks{Part of this work was done while the author was a PIMS Postdoctoral Fellow at the Department of Mathematics, Simon Fraser University, Burnaby, B.C.}\\[1mm]
  Shanghai Center for Mathematical Sciences\\
  Fudan University\\
  Shanghai, China\\
  {\tt hhwu@fudan.edu.cn}
}

\maketitle

\begin{abstract}
It is well known \cite{Erd} that a random subgraph of the complete graph $K_n$ has chromatic number $\Theta(n/\log n)$ w.h.p. Boris Bukh asked whether the same holds for a random subgraph of any $n$-chromatic graph, at least in expectation. In this paper it is shown that for every graph, whose fractional chromatic number is at least $n$, the fractional chromatic number of its random subgraph is at least $n/(8\log_2(4n))$ with probability more than $1-\frac{1}{2n}$. This gives the affirmative answer for a strengthening of Bukh's question for the fractional chromatic number.
\end{abstract}

\section{Introduction}

If $G$ is a graph and $p \in (0,1)$, we let $G_p$ denote a \emph{random subgraph} of $G$ where each edge of $G$ appears in $G_p$ independently with probability $p$.
The standard Erd\H{o}s-Renyi random graph $G_{n,p}$ can be viewed as a random subgraph of the complete graph $K_n$.  A lot is known about properties of random graphs, but not so much about the generalized notion of random subgraphs. In particular, it is known that the chromatic number of $G_{n,p}$ is $\Theta(n/\log_{1/p}n)$, almost surely, for a wide range of values $p=p(n)$. Boris Bukh \cite{Bukh} asked whether the same phenomenon occurs for random subgraphs of any $n$-chromatic graph, at least in expectation, when $p$ is constant.

\begin{problem}[Bukh]
\label{prb:Bukh}
Does there exist a constant $c$ so that for every graph $G$, the expected chromatic number of its random subgraphs satisfies:
$${\mathbb E}(\chi(G_{1/2})) > c \, \frac{\chi(G)}{\log \chi(G)}\,?$$
\end{problem}

Only very special cases have been considered in the past.
Bogolyubskiy et al. \cite{BGPR1,BGPR2} considered random subgraphs of certain distance graphs and their chromatic number. Kupavskii \cite{Ku16} studied the chromatic number of a random subgraph of Kneser and Schrijver graphs $KG(n,k)$ and $SG(n,k)$, as $n$ grows. For a wide range of parameters $k=k(n)$ and $p=p(n)$, he proved that $\chi(KG_p(n,k))$ is very close to $\chi(KG_p(n,k))$ w.h.p., differing by at most 4 in many cases. His work was preceded by pioneering work of Bollob\'as, Narayanan and Raigorodskii \cite{BNR} and Balogh, Bollob\'as, and Narayanan \cite{BBN}, who studied independent sets in random subgraphs of Kneser graphs $KG(n,k)$.

While Problem \ref{prb:Bukh} remains open, we found evidence to answer Bukh's question in the affirmative when the chromatic number is replaced by the fractional chromatic number. In fact, we show that this holds not only in expectation but holds for subgraphs of $G$ with high probability, see Theorem \ref{thm:main} below.

Let us first recall the definition of the fractional chromatic number. Let $\mathcal{I}(G)$ be the family of all independent sets of $G$. For each vertex $v\in V(G)$, let $\mathcal{I}(G,v)$ be the family of all those independent sets which contain $v$. For each independent set $I$, consider a nonnegative real variable $y_I$. The \emph{fractional chromatic number} of $G$, denoted by $\chi_f(G)$, is the minimum value of
\begin{equation}
   \sum_{I\in\mathcal{I}(G)} y_I, \quad \hbox{ subject to } \quad \sum_{I\in\mathcal{I}(G,v)} y_I \ge 1 \quad \hbox{ for each } v\in V(G).
\label{eq:chif_LP}
\end{equation}

In this paper, we prove in the affirmative a strengthening of Bukh's question for the fractional chromatic number.

\begin{thm}
\label{thm:main}
Let $t\ge2$ be a rational number and let\/ $G$ be a graph with $\chi_f(G)=t$.
Then for every $p\in (0,1)$ and every $c>0$ we have:
$$
   Pr\Bigl(\chi_f(G_p)\ge \frac{t}{4\log_{1/p}(et)+4+4c}\Bigr) > \frac{1-2p^c}{1-p^c}.
$$
\end{thm}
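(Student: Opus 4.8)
The plan is to pass to the LP dual of $\chi_f$ and reduce the whole statement to a high-probability upper bound on a weighted independence number of $G_p$. Since $\chi_f(G)=t$ is rational, the dual of (\ref{eq:chif_LP}) has an optimal rational solution, and clearing denominators yields an integral weighting $w\colon V(G)\to\mathbb{Z}_{\ge0}$ with $\alpha_w(G):=\max_{I\in\mathcal{I}(G)}\sum_{v\in I}w(v)=N$ and $\sum_v w(v)=tN$ for a suitable integer $N$. For any graph a weighting $y$ gives $\chi_f\ge\bigl(\sum_v y_v\bigr)/\alpha_y$, and since $\mathcal{I}(G)\subseteq\mathcal{I}(G_p)$ the same $w$ is admissible for $G_p$; hence $\chi_f(G_p)\ge tN/\alpha_w(G_p)$. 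Writing $D=4\log_{1/p}(et)+4+4c$, it therefore suffices to show that, with the claimed probability, $\alpha_w(G_p)<DN$, i.e.\ that $G_p$ has \emph{no} independent set $I$ with $w(I)\ge DN$.

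Next I would estimate, by a first moment organised into scales $s$, the probability that such a heavy set exists. If $I$ is independent in $G_p$ and $w(I)\ge sN$, then $\chi_f(G[I])\ge w(I)/\alpha_w(G[I])\ge w(I)/N\ge s$, so $\chi(G[I])\ge\lceil s\rceil$ and hence $e_G(I)\ge\binom{\lceil s\rceil}{2}$, where $e_G(I)$ is the number of edges of $G$ inside $I$. A set $I$ is independent in $G_p$ exactly when all these edges are deleted, an event of probability $(1-p)^{e_G(I)}$ (this per-edge deletion probability is what fixes the base of the logarithm in the statement). Thus, up to the combinatorial count, each heavy candidate at scale $s$ survives with probability at most $(1-p)^{\binom{\lceil s\rceil}{2}}$.

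The crux — and the step I expect to be the main obstacle — is to bound the \emph{number} of heavy candidate sets at scale $s$ by a function of $t$ alone, uniformly in $|V(G)|$ and $N$; a naive union bound over subsets of $V(G)$ is hopeless since $|V(G)|$ is unbounded. The mechanism I would exploit is that large $w$-weight is structurally scarce: two non-adjacent vertices of full weight $N$ would form an independent set of weight $2N>N=\alpha_w(G)$, so all weight-$N$ vertices are pairwise adjacent and form a clique of size at most $\omega(G)\le\chi_f(G)=t$. A weighted Tur\'an/Ramsey-type refinement of this observation should show that the inclusion-minimal heavy, few-edge configurations at scale $s$ are essentially $s$-cliques among near-maximal-weight vertices, of which there are at most about $\binom{\lceil t\rceil}{\lceil s\rceil}\le(et/s)^{s}$ — independent of $|V(G)|$ and $N$. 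Combining this count with the survival estimate gives a scale-$s$ first moment of order $(et/s)^{s}(1-p)^{\binom{\lceil s\rceil}{2}}$.

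Finally I would optimise the threshold and sum the geometric tail. The term $(et/s)^{s}(1-p)^{\binom{\lceil s\rceil}{2}}$ is driven below any prescribed level once $s=\Theta\bigl(\log_{1/p}(et)\bigr)$, and the additive margin $4+4c$ in $D$ is precisely what is needed so that at the base scale the first moment is at most $p^{c}$ and each unit increase of $s$ multiplies it by a factor at most $p^{c}$ (increasing $s$ by one adds $\sim s$ to the exponent of the small deletion probability, which dominates the growth of the count). Hence
\[
  Pr\bigl(\alpha_w(G_p)\ge DN\bigr)\ \le\ \sum_{j\ge1}p^{cj}\ =\ \frac{p^{c}}{1-p^{c}},
\]
so that $\chi_f(G_p)\ge tN/\alpha_w(G_p)>t/D$ with probability at least $1-\frac{p^{c}}{1-p^{c}}=\frac{1-2p^{c}}{1-p^{c}}$, which is exactly the asserted bound. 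The delicate point throughout is the uniform-in-$|V(G)|$ count in the previous paragraph; the factor $4$ (rather than the optimal $2$ for $K_n$) is the slack I would expect to lose in that weighted counting and in replacing $\binom{\lceil s\rceil}{2}$ by a convenient lower bound.
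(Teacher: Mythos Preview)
Your overall framework---pass to the dual weighting $w$, reduce to a high-probability bound on the weighted independence number $\alpha_w(G_p)$, and finish with a geometric tail sum---is exactly right, and it matches how the paper proceeds. The gap is precisely where you flag it: the uniform-in-$|V(G)|$ count of ``heavy, few-edge configurations''. Your proposed mechanism (heavy weight is scarce, so the minimal configurations are $s$-cliques among at most $t$ near-maximal-weight vertices) does not survive even the simplest blow-ups. In the complete $t$-partite graph $K_{m,\dots,m}$ with the uniform weight $w\equiv 1$ one has $N=m$, \emph{every} vertex has weight $1$, and there is no clique of high-weight vertices to restrict to; your observation about weight-$N$ vertices is vacuous here. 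The edge bound $e_G(I)\ge\binom{\lceil s\rceil}{2}$ is also far too weak to carry a first-moment argument: in this example any $I$ with $|I|\ge sm$ has $e_G(I)\ge\binom{s}{2}m^2$, while the number of size-$sm$ subsets is $\binom{tm}{sm}$, so both the count and the edge exponent scale with $m$ and must be played against each other. A scale-by-scale union bound using only $\binom{\lceil s\rceil}{2}$ edges against $(et/s)^s$ candidates simply does not control the $m$-dependence.

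The paper resolves exactly this difficulty, and the device is worth internalising. Fix the weight function $w$ with $w(V)=t$ and $w(I)\le 1$ for every $I\in\mathcal I(G)$, and order $V=\{v_1,\dots,v_n\}$ by non-increasing weight. Call a $k$-set $X\subseteq V$ \emph{$t$-principal} if $X\subseteq V_{\lfloor tk\rfloor}$. The crucial point is that there are at most $\binom{tk}{k}\le (et)^k$ such sets for each $k$, a bound depending only on $t$ and $k$ and not on $|V(G)|$. The paper then runs the first-moment argument not over heavy independent sets but over $t$-principal sets whose average degree in $G$ is at least $x=2\log_{1/p}(et)+2c$: the probability that a fixed one is independent in $G_p$ is at most $p^{xk/2}=(p^c/et)^k$, and summing over $k$ gives failure probability at most $\sum_{k\ge1}p^{ck}=p^c/(1-p^c)$. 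The second, purely deterministic, step (the paper's Lemma~\ref{lem:6}) shows that any set $A$ which contains no $t$-principal subset of average degree $\ge x$, and in which every $G$-independent subset has weight $\le 1$, satisfies $w(A)\le 2\lfloor x+1\rfloor+2w(V)/t\le 2x+4$; this uses a second ``sparse vs.\ principal'' dichotomy together with a degeneracy argument. Applying this to each independent set $A$ of $G_p$ yields $\alpha_w(G_p)\le 2x+4$ on the good event, hence $\chi_f(G_p)\ge t/(2x+4)$. So the paper trades your direct union bound over heavy sets for a union bound over principal sets plus a structural lemma; it is this decoupling---not a sharper count of heavy sets---that makes the estimate uniform in $|V(G)|$.
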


By taking $c=\log_{1/p}(et)$, we obtain the following:

\begin{cor}
\label{cor:chif(Gp)}
If $\chi_f(G)=t$ and $p\in (0,1)$, then
$$\chi_f(G_p) \ge \frac{t}{8\log_{1/p}(et)+4}$$
with probability at least\/ $1-\tfrac{1}{2t}$.
\end{cor}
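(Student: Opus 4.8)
The plan is to bound $\chi_f(G_p)$ from below through the weighting (dual LP) description of the fractional chromatic number, namely $\chi_f(H)=\max_{w} w(V)/\alpha_w(H)$, where $w$ ranges over nonnegative vertex weightings and $\alpha_w(H)$ is the maximum weight of an independent set of $H$. First I would fix an \emph{optimal} weighting $w$ for $G$ itself, scaled so that $\alpha_w(G)=1$ and hence $w(V)=t$; since $t$ is rational we may take $w$ rational, and note that every weight is at most $\alpha_w(G)=1$. Because $G_p\subseteq G$ shares the vertex set, this same $w$ gives $\chi_f(G_p)\ge w(V)/\alpha_w(G_p)=t/\alpha_w(G_p)$. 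Writing $D=4\log_{1/p}(et)+4+4c$, the theorem then reduces to the single estimate $Pr\bigl(\alpha_w(G_p)>D\bigr)\le \tfrac{p^c}{1-p^c}$, since $1-\tfrac{1-2p^c}{1-p^c}=\tfrac{p^c}{1-p^c}$.

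Next I would estimate $Pr(\alpha_w(G_p)>D)$ by a first-moment (union) bound over the supports of heavy independent sets. If $A\subseteq V(G)$ is independent in $G_p$ then all $e_G(A)$ edges that $G$ induces on $A$ must be deleted, an event of probability $p^{e_G(A)}$, so it suffices to bound $\sum_A p^{e_G(A)}$ over the relevant sets $A$. The crucial structural input is that heaviness forces many edges: if $w(A)>D$ then, since every independent subset of $A$ has weight at most $\alpha_w(G)=1$, the weighting $w$ itself witnesses $\chi_f(G[A])\ge w(A)/\alpha_w(G[A])>D$. A graph with fractional chromatic number exceeding $D$ has degeneracy at least $\lceil D\rceil-1$ (because $\chi_f\le\chi\le\mathrm{col}$), hence contains a subgraph of minimum degree $\ge\lceil D\rceil-1$ and therefore at least $\binom{\lceil D\rceil}{2}$ edges. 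More importantly, after deleting the vertices that $G[A]$ isolates (their total weight is at most $1$) one is left with a dense core whose edge count grows with its size, which is what lets the per-term factor $p^{e_G(A)}$ defeat the number of competing sets.

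Finally I would organize the summation so that only $t$ (and not $|V(G)|$) enters the counting, grouping the sets $A$ by the size $s$ of this dense core and using a Turán-type bound $e_G(A)\gtrsim \binom{s}{2}$ relative to the independence scale; the contribution of cores of each size $s\ge D$ is then at most roughly $\binom{t}{s}p^{\binom{s}{2}}\le (et/s)^{s}p^{\binom s2}$, and the choice $D\ge 4\log_{1/p}(et)$ makes each such term at most $p^{cs}$, so that summing the geometric series $\sum_{s\ge1}p^{cs}=\tfrac{p^c}{1-p^c}$ yields exactly the target bound. The main obstacle — and the place demanding care — is precisely this last bookkeeping in the presence of \emph{non-uniform} weights: a single edge of $G$ corresponds to an entire complete-bipartite block of the weighted blow-up, so a heavy independent set can be certified by very few deleted edges, and one must still show that the number of supports of each given edge-count is controlled by a function of $t$ alone. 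Reconciling the \emph{weighted} fractional-chromatic lower bound (which drives heaviness) with the \emph{unweighted} edge count (which drives the deletion probability $p^{e_G(A)}$), uniformly over graphs $G$ of unbounded order, is the heart of the argument; for the vertex-transitive/unit-weight case the blow-up is $G$ itself and the Turán estimate closes the proof immediately, so the real work is extending this robustly to arbitrary weightings.
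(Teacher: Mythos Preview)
The corollary itself follows from Theorem~\ref{thm:main} in one line: substitute $c=\log_{1/p}(et)$, so that $p^c=1/(et)$, the denominator becomes $8\log_{1/p}(et)+4$, and $\tfrac{1-2/(et)}{1-1/(et)}=1-\tfrac{1}{et-1}\ge 1-\tfrac{1}{2t}$ for $t\ge2$. Your proposal instead tries to reprove the theorem from scratch.

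As a proof of the theorem, it has a real gap, and you flag it yourself in the last paragraph: the first-moment bound $\sum_A p^{e_G(A)}$ over all heavy $A$ cannot be closed unless the number of competing supports is controlled by a function of $t$ alone, and you never establish this. Writing the count as ``roughly $\binom{t}{s}$'' is unjustified --- $G$ may have arbitrarily many vertices, so for each size there are in general vastly more than $\binom{t}{s}$ candidate sets --- and neither the degeneracy observation $\chi_f\le\mathrm{col}$ nor the vague ``Tur\'an-type bound $e_G(A)\gtrsim\binom{s}{2}$'' supplies the missing compactness. You end by saying that handling non-uniform weights ``is the heart of the argument'' and then stop; that is precisely the step that needs a new idea, not an acknowledgment.

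The paper's idea is to avoid a union bound over independent sets of $G_p$ altogether. It orders $V(G)$ by decreasing $w$-weight and restricts the union bound (Lemma~\ref{lem:5}) to \emph{$t$-principal} $k$-sets --- subsets contained among the top $tk$ vertices --- of which there are at most $\binom{tk}{k}\le(et)^k$ regardless of $|V(G)|$; this is what produces the probability $\tfrac{p^c}{1-p^c}$. The link back to \emph{all} independent sets of $G_p$ is a separate deterministic lemma (Lemma~\ref{lem:6}): any set $A$ that contains no $t$-principal subset of average degree $\ge x$, and in which every $G$-independent subset has weight $\le1$, must satisfy $w(A)\le 2\lfloor x+1\rfloor+2$. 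Applied with $x=2\log_{1/p}(et)+2c$ to each independent set of $G_p$ on the good event, this yields exactly $\alpha_w(G_p)\le 2x+4=D$. The principal/sparse machinery is what stands in for your unproven ``$\binom{t}{s}$'' count, and nothing in your sketch substitutes for it.
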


\section{Fractional weight and principal vertex-sets}

The proof of Theorem \ref{thm:main} uses the tools presented in this section.
They are based on two concepts, that of a principal subset of vertices and that of a sparse subset. These two notions were used previously in our fractional versions of the Erd\H{o}s-Neumann Lara conjecture (see \cite{MW}) and the Erd\H{o}s-Hajnal conjecture from \cite{EH} for triangle-free subgraphs (see \cite{MW_trianglefree}) that every graph with large chromatic number contains a triangle-free subgraph whose chromatic number is still large.

We let $V=V(G)$, $n=|V|$, and $t = \chi_f(G)$.
By the linear program duality for the definition (\ref{eq:chif_LP}) of $\chi_f(G)$, there exists a non-negative \emph{weight function} $w:V\to\RR^+$, such that $w(V)=t$, and for any $I\in \mathcal{I}(G)$, $w(I)\le 1$. See \cite{GR} for more details.
Here and in the sequel we write $w(A) = \sum_{v\in A} w(v)$ for any vertex set $A\subseteq V$, and call this value the \emph{weight\/} of $A$.

From now on we fix $w$ and assume that the vertices of $G$ are listed as $\VEC v1n$ in the non-increasing order of their weights, i.e. $w(v_{i+1})\le w(v_i)$ for $i=1,\dots,n-1$. For any subset $X$ of $V$, we also rank the elements in $X$ according to the ordering of $V$, and we denote by $X_k$ the subset of the first $k$ elements in $X$. In particular, $V_k=\{\VEC v1k\}$. We extend this notion to any real number $s\ge1$ by setting
$X_s := X_{\lfloor s\rfloor}$.

For a real number $s\ge 1$, a nonempty subset $X$ of a vertex set $Y$ is said to be \emph{$s$-principal\/} in $Y$ if $X\subseteq Y_{s|X|}$. That is, if $X$ has size $m$, then all elements of $X$ are among the first $\lfloor sm\rfloor$ vertices in $Y$. As a kind of opposite property, we say that a subset $X$ of $Y$ is \emph{$s$-sparse} in $Y$ if $X$ contains no $s$-principal subset in $Y$. Note that every subset of an $s$-sparse set in $Y$ is also $s$-sparse in $Y$. When the hosting set $Y$ for $s$-principal or $s$-sparse is not specified, by default it is the whole vertex-set~$V$.

The following condition gives another description of sparse sets that is easier to deal with computationally.

\begin{lem}
\label{lem:sparse characterization}
$X$ is an $s$-sparse set in $Y$ if and only if $|Y_k\cap X|< k/s$ for every integer $k = 1,2,\dots |Y|$. In particular, if $X$ is $s$-sparse in $Y$, then $|X|<|Y|/s$.
\end{lem}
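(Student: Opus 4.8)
The plan is to prove the equivalence via its contrapositive, translating the negation of $s$-sparseness into the failure of the counting inequality at some index. Since $X$ is $s$-sparse in $Y$ precisely when it contains no $s$-principal subset, I would instead establish the dual statement: $X$ contains an $s$-principal subset if and only if $|Y_k\cap X|\ge k/s$ for some integer $k\in\{1,\dots,|Y|\}$. Negating both sides then gives the claimed characterization, and the ``in particular'' clause will follow by specializing $k$.

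For the forward direction, I would start from an $s$-principal subset $Z\subseteq X$ and let $k$ be the largest $Y$-rank attained by an element of $Z$; taking $k$ to be this top rank is the key device, since it automatically keeps $k$ within the range $1\le k\le |Y|$. Writing $\ell=|Z|$, the principality condition $Z\subseteq Y_{\lfloor s\ell\rfloor}$ forces $k\le \lfloor s\ell\rfloor\le s\ell$, hence $\ell\ge k/s$. Every element of $Z$ lies in $Y_k\cap X$ by the choice of $k$, so $|Y_k\cap X|\ge \ell\ge k/s$, as required.

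For the converse I would exhibit the principal subset explicitly rather than reverse the computation. Given an index $k$ with $|Y_k\cap X|\ge k/s$, I would take $Z:=Y_k\cap X$ and set $m=|Z|$. The hypothesis gives $m\ge k/s$, so $sm\ge k$ and therefore $\lfloor sm\rfloor\ge k$ (here the integrality of $k$ is what lets me replace $sm$ by its floor). Since every element of $Z$ has $Y$-rank at most $k\le \lfloor sm\rfloor$, we get $Z\subseteq Y_{\lfloor sm\rfloor}=Y_{s|Z|}$; moreover $Z$ is nonempty because $m\ge k/s>0$ forces $m\ge 1$. Thus $Z$ is an $s$-principal subset of $X$, completing the equivalence.

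The final assertion then drops out immediately by taking $k=|Y|$, for which $Y_k=Y$ and $Y_k\cap X=X$, yielding $|X|<|Y|/s$. I do not anticipate a substantive obstacle here: the lemma is elementary, and the only steps requiring genuine care are bookkeeping ones, namely ensuring the witnessing index $k$ stays inside $\{1,\dots,|Y|\}$ (handled by letting $k$ be the top rank occurring in $Z$) and correctly passing between the strict inequality in the statement and the floor in the definition $Y_{s|X|}=Y_{\lfloor s|X|\rfloor}$ by invoking the integrality of $k$.
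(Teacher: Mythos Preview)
Your proof is correct and amounts to the same elementary argument as the paper's, just phrased via the contrapositive. The paper first observes that $X$ is $s$-sparse iff $|Y_{sr}\cap X|<r$ for all $r$, then matches each $k$ with the $r$ satisfying $s(r-1)<k\le sr$; your construction $Z=Y_k\cap X$ is exactly the witness that makes this correspondence explicit.
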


\begin{proof}
It is clear that $X$ is $s$-sparse if and only if for each $r=1,\dots,|X|$, $|Y_{sr}\cap X| < r$.
If $|Y_k\cap X|< k/s$ for every integer $k$, then this holds also for $k=\lfloor sr\rfloor$, implying that
$|Y_{sr}\cap X| < \lfloor sr\rfloor / s \le r$. Conversely, if $|Y_{sr}\cap X| < r$, then $|Y_{sr}\cap X| \le r-1$. Let $s(r-1) < k \le sr$. Then $|Y_k\cap X| \le |Y_{sr}\cap X| \le r-1 < k/s$.
\end{proof}

The next claim about the total weight of an $s$-sparse set will be essential for us.

\begin{lem}\label{lem:sparseislight}
Let $s\ge1$ be a real number.
If $X$ is an $s$-sparse subset of\/ $Y$, then $w(X)\le \frac{1}{s}\,w(Y)$.
\end{lem}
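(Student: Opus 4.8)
The plan is to combine the prefix bound from Lemma~\ref{lem:sparse characterization}, namely $|Y_k\cap X|<k/s$ for every $k$, with the fact that the vertices are listed in non-increasing order of weight, and to convert these pointwise bounds on prefixes into a bound on the total weight by \emph{summation by parts} (Abel summation). Write $m=|Y|$ and list the elements of $Y$ in the global order as $y_1,\dots,y_m$, so that $w(y_1)\ge w(y_2)\ge\cdots\ge w(y_m)$ and $Y_k=\{y_1,\dots,y_k\}$. For $1\le i\le m$ let $x_i=1$ if $y_i\in X$ and $x_i=0$ otherwise, and set $S_k=\sum_{i=1}^k x_i=|Y_k\cap X|$, with $S_0=0$. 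Since $X\subseteq Y$ we have $w(X)=\sum_{i=1}^m x_i\,w(y_i)$, and the whole problem reduces to estimating this sum.

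First I would record the two ingredients. By Lemma~\ref{lem:sparse characterization}, $s$-sparseness gives $S_k<k/s$, hence $S_k\le k/s$, for every $k$. Because the weights are non-increasing, the consecutive differences $w(y_i)-w(y_{i+1})$ are all non-negative, as is $w(y_m)$. Next I would apply summation by parts to $\sum_{i=1}^m x_i\,w(y_i)$, using $x_i=S_i-S_{i-1}$, to obtain
$$
   w(X)=S_m\,w(y_m)+\sum_{i=1}^{m-1}S_i\bigl(w(y_i)-w(y_{i+1})\bigr).
$$
Since every factor multiplying an $S_i$ (and the factor multiplying $S_m$) is non-negative, I may replace each $S_i$ by its upper bound $i/s$ without decreasing the right-hand side, giving
$$
   w(X)\le\frac{1}{s}\Bigl(m\,w(y_m)+\sum_{i=1}^{m-1}i\bigl(w(y_i)-w(y_{i+1})\bigr)\Bigr).
$$

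Finally I would reverse the summation by parts (or simply telescope the sum): the bracketed expression collapses to $\sum_{i=1}^m w(y_i)=w(Y)$, yielding $w(X)\le\tfrac1s\,w(Y)$, as required. The only step that needs genuine care is the sign bookkeeping in the summation by parts: the argument hinges on the coefficients $w(y_i)-w(y_{i+1})$ being non-negative, which is precisely where the non-increasing weight ordering is used and what permits substituting the prefix bounds $S_i\le i/s$ in the favourable direction. Intuitively, the sparsity condition forbids $X$ from crowding into the (heavy) initial segments $Y_k$, so the heaviest possible placement of $X$ is still held below the $1/s$ proportion. I expect no real obstacle beyond this bookkeeping; equivalently, one could argue through the layer-cake identity $w(A)=\int_0^\infty|\{a\in A:w(a)>\lambda\}|\,d\lambda$, observing that $\{y\in Y:w(y)>\lambda\}$ is an initial segment $Y_{k(\lambda)}$ and that $|X\cap Y_{k(\lambda)}|<k(\lambda)/s$, then integrating over $\lambda$ to reach the same bound.
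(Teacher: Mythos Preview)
Your proof is correct. The Abel summation is carried out cleanly, and the key point---that the non-negativity of the differences $w(y_i)-w(y_{i+1})$ lets you substitute the prefix bounds $S_i\le i/s$ in the right direction---is exactly the crux, as you note.

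The paper's argument reaches the same conclusion by a slightly different route. Rather than bounding the prefix counts $S_k=|Y_k\cap X|$ and summing by parts, the authors bound the individual weights: from $s$-sparseness they deduce that the $i$th element $x_i$ of $X$ lies outside $Y_{si}$, so $w(x_i)\le w(y_j)$ for every $j\le\lceil si\rceil$. They then package these pointwise bounds into an integral inequality $s\sum_i w(x_i)\le\int_0^{s|X|} w(y_{\lceil z\rceil})\,dz\le w(Y)$. The two arguments are dual in the usual ``integrate vertically vs.\ horizontally'' sense (indeed, your closing remark about the layer-cake formula is essentially the paper's viewpoint). Your Abel-summation version is arguably tidier, since it stays purely discrete and avoids the auxiliary step function $f(z)=w(y_{\lceil z\rceil})$; the paper's version has the mild advantage that it makes the role of the individual positions of the $x_i$ more visible. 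Either way the content is the same.
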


\begin{proof}
Let $\VEC y1r$ be the non-decreasing order of the elements of $Y$ with $r=|Y|$, and let $\VEC x1m$ be the ordering of $X$ with $m=|X|$. Since $X$ is an $s$-sparse subset of $Y$, we have $x_i\not\in Y_{si}$. Hence for $1\le i\le m$, $w(x_i)\le w(y_j)$ if $1\le j\le \lfloor si\rfloor$. Moreover, since $x_i\in Y\setminus Y_{si}$, we also have $w(x_i)\le w(y_j)$ for $j=\lceil si\rceil$.

For a real parameter $z\in(0,|Y|]$, define $f(z)=y_{\lceil z\rceil}$. Then $f(z)\ge w(x_1)$ for $0<z\le s$, $f(z)\ge w(x_2)$ for $s<z\le 2s$, \dots, $f(z)\ge w(x_m)$ for $(m-1)s < z \le ms$.
Therefore,
$$
    s\,w(X) = s\sum_{i=1}^m w(x_i) \le \int_0^{sm} f(z)dz \le \sum_{j=1}^{\lceil sm\rceil} w(y_j) \le w(Y),
$$
which gives what we were aiming to prove.
\end{proof}

\section{Fractional Chromatic number of a random subgraph}

\begin{lem}
\label{lem:5}
Let $p\in (0,1)$, $c>0$, and $s\ge1$ be real numbers.
With probability at least $\frac{1-2p^{c}}{1-p^{c}}$ no $s$-principal set in $V(G)$ with average degree at least $2\log_{1/p}(es)+2c$ in $G$ is independent in $G_p$.
\end{lem}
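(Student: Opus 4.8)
The plan is to run a first-moment (union bound) argument over all candidate ``bad'' sets, namely all $s$-principal sets $X$ whose average degree in $G$ is at least $d:=2\log_{1/p}(es)+2c$, estimating for each the probability that it is independent in $G_p$. Write $m=|X|$. Because $X$ is $s$-principal it lies inside $V_{\lfloor sm\rfloor}$, so the number of $s$-principal sets of size $m$ is at most $\binom{\lfloor sm\rfloor}{m}$, which I would bound crudely by $(es)^m$ via $\binom{N}{m}\le(eN/m)^m$ with $N=\lfloor sm\rfloor\le sm$. On the probabilistic side, the average-degree hypothesis forces $X$ to span at least $e(X)\ge dm/2$ edges of $G$; since $X$ is independent in $G_p$ precisely when all of these edges are missing from $G_p$, and each edge is absent independently with probability $p$, this event has probability $p^{e(X)}\le p^{dm/2}$.

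The linchpin is the identity $p^{\log_{1/p}(es)}=1/(es)$, which rewrites the probability as
$$p^{dm/2}=p^{m\log_{1/p}(es)}\cdot p^{cm}=(es)^{-m}\,(p^c)^m .$$
Multiplying the count $(es)^m$ by this probability cancels the $(es)$ factors exactly, so the expected number of bad sets of size $m$ is at most $(p^c)^m$. Summing over $m\ge1$ (terms with $m>|V|$ vanish, and extending the sum to infinity only enlarges the bound) gives
$$\mathbb{E}\bigl[\#\{\text{bad sets}\}\bigr]\le\sum_{m\ge1}(p^c)^m=\frac{p^c}{1-p^c},$$
the geometric series converging since $p^c<1$. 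By Markov's inequality the probability that some bad set exists is at most this expectation, whence the probability that none exists is at least $1-\frac{p^c}{1-p^c}=\frac{1-2p^c}{1-p^c}$, which is exactly the asserted bound.

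I expect the difficulties to be bookkeeping rather than conceptual. First, I must justify the counting estimate $\binom{\lfloor sm\rfloor}{m}\le(es)^m$ for every real $s\ge1$ and integer $m\ge1$; the floor is harmless because $\lfloor sm\rfloor\le sm$ and $m!\ge(m/e)^m$. Second, the clean telescoping depends entirely on the base of the logarithm matching the per-edge absence probability, so I would verify the convention that an edge of $G$ fails to lie in $G_p$ with probability $p$, which yields $\Pr[X\text{ independent}]=p^{e(X)}$ and is precisely what makes the threshold $2\log_{1/p}(es)+2c$ the right one. Finally, when $p^c\ge\tfrac12$ the claimed bound $\frac{1-2p^c}{1-p^c}$ is non-positive and the statement is vacuous, so that regime needs no separate treatment. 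The sparse-set lemmas are not needed here; this lemma stands alone as a single moment computation.
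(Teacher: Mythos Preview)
Your proposal is correct and follows essentially the same argument as the paper: a union bound over $s$-principal $m$-sets, the count $\binom{\lfloor sm\rfloor}{m}\le(es)^m$, the probability bound $p^{e(X)}\le(es)^{-m}(p^c)^m$ via the identity $p^{\log_{1/p}(es)}=1/(es)$, and the geometric series $\sum_{m\ge1}(p^c)^m=p^c/(1-p^c)$. You are in fact slightly more careful than the paper (you track the floor in $\binom{\lfloor sm\rfloor}{m}$, flag the edge-absence convention needed for $\Pr[X\text{ independent}]=p^{e(X)}$, and note the vacuity when $p^c\ge\tfrac12$), but the route is the same.
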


\begin{proof}
Let $A$ be an $s$-principal vertex-set of cardinality $k$ and with average degree at least $2\log_{1/p}(es)+2c$.  Note that $A$ is independent in $G_p$ if and only if none of the edges of $G(A)$ is present in $G_p$. In other words,
$$
   Pr(A\mbox{ is independent}) = p^{e(G(A))} \le p^{(\log_{1/p}(es)+c)k}=
   \Bigl(\frac{p^c}{es}\Bigr)^k.
$$
Also, as an $s$-principal set, $A$ is a $k$-set contained in $V_{sk}$. Thus, there are at most ${sk\choose k}$ $s$-principal sets with $k$ elements.
Let $B_k^s$ be the event that some $s$-principal $k$-set with average degree at least $2\log_{1/p}(es)+2c$ is independent.
By the above, the probability of $B_k^s$ is at most
\begin{equation}\label{eq:B_s}
   \biggl(\frac{p^c}{es}\biggr)^k{sk\choose k}\le \biggl(\frac{p^c}{es}\biggr)^k(es)^k = p^{ck}.
\end{equation}
Using (\ref{eq:B_s}) we see that the probability that some $s$-principal set with average degree at least $2\log_{1/p}(es)+2c$ is independent in $G_p$ is at most
$$
  \sum_{k=1}^n Pr(B_k^s) \le \sum_{k=1}^n p^{ck} < \frac{p^c}{1-p^c}.
$$
This implies that with probability more than $\frac{1-2p^c}{1-p^c}$ no $s$-principal set of $G$ with average degree at least $2\log_{1/p}(es)+2c$ is independent in $G_p$.
\end{proof}

\begin{lem}
\label{lem:6}
Suppose that $A\subseteq V(G)$ is a vertex-set that contains no $s$-principal sets whose average degree in $G$ is at least $x$.
If every independent subset of $A$ has weight at most $1$, then $A$ has weight at most $2\lfloor x+1\rfloor + \frac{2w(V)}{s}$.
\end{lem}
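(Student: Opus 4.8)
The plan is to split $A$ into a ``colourable'' part $L$ of weight at most $\lfloor x+1\rfloor$ and a ``sparse'' part $R$ of weight at most $w(V)/s$; the two estimates add up to comfortably less than the claimed bound. Set $D=\lfloor x+1\rfloor$ and run the following peeling process on $G[A]$, maintaining a current vertex set $S$ that starts at $S=A$. At each step, if the induced graph $G[S]$ has a vertex of degree less than $D$, I remove one such vertex and place it in $L$; otherwise $G[S]$ has minimum degree at least $D$, and I remove the vertex of $S$ of largest rank (the latest one in the weight order $v_1,\dots,v_n$) and place it in $R$. I repeat until $S=\emptyset$, so that $L$ and $R$ partition $A$. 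Throughout, write $\rho(v)$ for the index of $v$ in the ordering $v_1,\dots,v_n$.

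First I would bound $w(L)$. Each vertex, at the moment it is put into $L$, has fewer than $D$ neighbours in the current set $S$, hence fewer than $D$ neighbours among all vertices removed after it; reading the removal order backwards and restricting attention to $L$ therefore exhibits $G[L]$ as a $(D-1)$-degenerate graph. Consequently $\chi(G[L])\le D$, so $L$ is a union of at most $D$ independent sets of $G$. Since every independent subset of $A$ has weight at most $1$ by hypothesis, this gives $w(L)\le D=\lfloor x+1\rfloor$.

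The heart of the argument is the bound on $w(R)$, which is where the hypothesis on principal sets enters. Whenever a vertex is removed into $R$, the current set $S$ has minimum degree at least $D$, hence average degree at least $D=\lfloor x+1\rfloor\ge x$; by the hypothesis $S$ cannot be $s$-principal, so its largest-rank vertex $v$ satisfies $\rho(v)>s\,|S|$, i.e. $|S|<\rho(v)/s$. I would then show $R$ is $s$-sparse in $V$ using Lemma~\ref{lem:sparse characterization}: fix an integer $m$, and among the vertices of $V_m\cap R$ let $v_j$ be the one removed first. At that moment every other vertex of $V_m\cap R$ is still present, so $V_m\cap R$ is contained in the current set $S_j$; since $v_j$ is its own largest-rank vertex at that step and $\rho(v_j)\le m$, we get $|V_m\cap R|\le|S_j|<\rho(v_j)/s\le m/s$. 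Thus $R$ is $s$-sparse, and Lemma~\ref{lem:sparseislight} yields $w(R)\le w(V)/s$. Adding the two parts gives $w(A)=w(L)+w(R)\le\lfloor x+1\rfloor+w(V)/s$, which is within the stated estimate $2\lfloor x+1\rfloor+\tfrac{2w(V)}{s}$.

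The main obstacle I anticipate is the tension between the two orderings the proof must respect: the degeneracy (minimum-degree) order that controls the chromatic number of $L$, and the weight/rank order that governs sparsity of $R$. A naive degeneracy peeling ignores ranks and can dump heavy, high-rank vertices into the core, while colouring in the fixed weight order wastes far too many colours. The interleaved process above resolves this by peeling low-degree vertices into $L$ but diverting every ``dense obstruction'' (a set of minimum degree $\ge D$, which by hypothesis must be rank-spread, hence non-principal) into $R$; the delicate point to verify carefully is precisely the $s$-sparsity of $R$, for which the ``first-removed witness'' $v_j$ above is the key device.
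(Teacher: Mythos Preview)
Your argument is correct, and in fact it delivers the sharper bound $w(A)\le \lfloor x+1\rfloor + w(V)/s$, a factor of~$2$ better than what the lemma states. Every step checks: when a vertex enters $R$ the current set $S\subseteq A$ has minimum (hence average) degree at least $D=\lfloor x+1\rfloor>x$, so by hypothesis $S$ is not $s$-principal in $V$, whence its last vertex $v$ satisfies $\rho(v)\ge \lfloor s|S|\rfloor+1>s|S|$; your ``first-removed witness'' then cleanly gives $|V_m\cap R|<m/s$ via Lemma~\ref{lem:sparse characterization}.

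The paper argues differently. It works entirely in the fixed weight order: with $d^{>}(v)$ the number of earlier $A$-neighbours of $v$, it sets $L=\{v\in A: d^{>}(v)\ge x\}$ and $S=A\setminus L$, so that $G[S]$ is $\lfloor x\rfloor$-degenerate. The set $L$ is then split as $L_1\cup L_2$ according to whether the initial segment $V_{i_j}\cap A$ is large (case~1) or non-principal (case~2); one shows $L_1$ is $2$-sparse in $A$ and $L_2$ is $s$-sparse in $V$, giving $w(L_1)\le \tfrac12 w(A)$ and $w(L_2)\le w(V)/s$, and the factor of~$2$ in the final bound comes from the $L_1$ estimate. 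Your adaptive peeling sidesteps this split entirely: by always ejecting the \emph{last} vertex of a dense obstruction you make the residue $R$ directly $s$-sparse in $V$, with no $2$-sparse-in-$A$ intermediary. The price is a slightly less ``static'' proof (one must track the evolving set $S$), but the payoff is a cleaner decomposition and the improved constant.
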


\begin{proof}
For any vertex-set $X$, we will denote by $e(X)$ the number of edges in the induced subgraph $G(X)$.
For a vertex $v \in A$, let $d^>(v)$ be the number of neighbors of $v$ in $G(A)$ that appear before $v$ in the ordering $\VEC v1n$. For each $i$, we have $e(A_i)=\sum_{v\in A_i} d^>(v)$. Let $L=\{v\in A: d^>(v)\ge x\}$, and assume $L=\{v_{i_1},\dots,v_{i_l}\}$, where $l=|L|$. Then $e(V_{i_j}\cap A)\ge x\cdot j $ for $1\le j\le l$. In particular, if $|V_{i_j}\cap A|\le 2j$, then the average degree $\bar d(V_{i_j}\cap A)$ will be at least $x$. Since $A$ contains no $s$-principal sets with average degree at least $x$, the set $V_{i_j}\cap A$ cannot be $s$-principal in this case. Thus, we have one of the following:

\medskip

(1) $|V_{i_j}\cap A|> 2j$, or

(2) $|V_{i_j}|> s\,|V_{i_j}\cap A|$.

\medskip
\noindent
Let $L_1=\{v_{i_j}\in L:|V_{i_j}\cap A|> 2j\}$, and let $L_2=\{v_{i_j}\in L: |V_{i_j}|\ge t|V_{i_j}\cap A|\}$. Then $L=L_1\cup L_2$.
If $v_{i_j}\in L_1$, then $v_{i_j}$ is not among the first $2j$ elements of $A$. As the $j$-th element in $L_1$ does not appear before $v_{i_j}$, the $j$th element of $L_1$ is not among the first $2j$ elements of $A$. Therefore $L_1$ is a 2-sparse subset of $A$. By Lemma~\ref{lem:sparseislight}, $w(L_1)\le \frac{1}{2}w(A)$.

For each $v_{i_j}\in L_2$, we have $|V_{i_j}\cap A|<\frac{1}{t}|V_{i_j}|$. As $|V_{i_j}\cap L_2|\le |V_{i_j}\cap A|<\frac{1}{t}|V_{i_j}|$,
we see by Lemma \ref{lem:sparse characterization} that $L_2$ is an $s$-sparse subset of $V$. By Lemma~\ref{lem:sparseislight}, $w(L_2)\le \frac{w(V)}{s}$.

Let $S=A-L=\{v\in A: d^>(v)<x\}$. Then we have $w(S)\ge w(A)-w(L_1)-w(L_2)\ge \frac{w(A)}{2}-\frac{w(V)}{s}$. Also, $G(S)$ is an $\lfloor x\rfloor$-degenerate graph, hence $S$ is $\lfloor x+1\rfloor$-colorable. There is at least one independent set $I\subseteq S$ with weight
$$w(I) \ge \frac{w(S)}{\lfloor x+1\rfloor}\ge \frac{w(A)/2-w(V)/s}{\lfloor x+1\rfloor}.$$
As every independent set contained in $A$ has weight at most $1$, we have 
$w(A)\le 2\lfloor x+1\rfloor + 2w(V)/s$.
\end{proof}

\begin{proof}[Proof of Theorem \ref{thm:main}]
We are going to use Lemmas \ref{lem:5} and \ref{lem:6} with $s=t=\chi_f(G)$, and $x=2\log_{1/p}(et)+2c$.
With probability at least $(1-2p^c)/(1-p^c)$, no principal set with average degree in $G$ at least $x$ is independent in $G_p$ by the first lemma. Thus it is sufficient to see that every such subgraph $G_p$ has fractional chromatic number at least $\tfrac{t}{2x+4}$. To see this, we will use the second lemma.

Note that the weight function $w$ defines $\chi_f(G)$, i.e., $w(V)=t$. Define the weight function $w' = w/(2x+4)$ and consider any independent vertex-set $A$ in $G_p$. By Lemma \ref{lem:6},
$$w'(A) = \frac{w(A)}{2x+4} \le \frac{2\lfloor x+1\rfloor + 2w(V)/t}{2x+4} \le 1.$$
This weight function thus justifies that $\chi_f(G_p) \le w'(V) = t/(2x+4)$.
\end{proof}

\end{document}